\newtheorem{theorem}{Theorem}[section]
\newtheorem{lemma}[theorem]{Lemma}
\newtheorem{proposition}[theorem]{Proposition}
\theoremstyle{definition}
\newtheorem{definition}[theorem]{Definition}
\theoremstyle{remark}
\newtheorem{remark}[theorem]{Remark}
\newcommand{\Z}{\mathbb{Z}}
\newcommand{\Q}{\mathbb{Q}}
\newcommand{\A}{\mathbb{A}}
\renewcommand{\P}{\mathbb{P}}
\newcommand{\cA}{\mathcal{A}}
\newcommand{\bs}{\backslash}
\newcommand{\wt}{\widetilde}
\newcommand{\of}[1]{\left(#1\right)}
\newcommand{\set}[1]{\left\{#1\right\}}
\newcommand{\spmat}[1]{\left(\begin{smallmatrix}#1\end{smallmatrix}\right)}
\newcommand{\adots}{\mathinner{\mkern2mu%
		\raisebox{0.1em}{.}\mkern2mu\raisebox{0.4em}{.}%
		\mkern2mu\raisebox{0.7em}{.}\mkern1mu}}
\title[Projection operators]{On an identity for the projection operators of automorphic forms}
\author[B. Wang]{Biao Wang}
\address{Department of Mathematics, State University of New York at Buffalo, Buffalo, NY 14260-2900, USA}
\email{bwang32@buffalo.edu}
\date{\today}
\subjclass[2020]{11F70}
\keywords{Projection operator; Converse theorems; Fourier-Whittaker expansions}
\begin{document}
\begin{abstract}
	In this note, we revisit an identity that Miller and Schmid showed in their article on a general Voronoi summation formula for $GL(n,\Z)$ in 2009. For the proof, we mainly follow Cogdell and Piatetski-Shapiro's ideas in their work on converse theorems for $GL(n)$.
\end{abstract}

\maketitle
\section{Introduction}

The Voronoi summation formula is a basic tool in the study of analytic number theory. For example, it is a basic ingredient in the $GL(2)$ and $GL(3)$ subconvexity problem \cite{Munshi2018}. Miller and Schmid \cite{MillerSchmid2006} first established the Voronoi formula for $GL(3)$ over $\Q$. Then it was generalized to $GL(n)$ over $\Q$ in \cite{MillerSchmid2011} with another proof later found in \cite{GoldfeldLi2008}. In the appendix of \cite{MillerSchmid2011}, they showed an identity which is explicitly used by Jacquet, Piatetski-Shapiro, and Shalika to establish the functional equation of $L$-functions on $GL(n)\times GL(m)$ for $|n-m|>1$. In \cite{IchinoTemplier2013}, Ichino and Templier discovered such an identity as well. And they used it as a basic ingredient to prove their Voronoi formula over number fields.  Actually, this identity can be interpreted as an identity on the projection operator $\P_m^n$ for $1\le m\le n-2$ and $n\ge4$. We state it as follows.

\begin{proposition}\label{mainidentity}
	Let $n\ge4$. Let $F$ be a global field, and $\A$  the ring
	of adeles of $F$. Let  $\cA_{\text{cusp}}(GL_n)$ be the space of automorphic cusp forms
	on $GL_n(\A)$. Let $\P_m^n$ be the projection operator defined by (\ref{podfneq}), and  $\wt\P_m^n$  the dual projection operator defined by (\ref{dualpodfneq}). Then for $1\le m\le n-2$, restricted on $\cA_{\text{cusp}}(GL_n)$ the following identity holds:
	\begin{equation}\label{mainidentityeq}
		\P_m^n=\wt{\P}_m^n\circ\iota.
	\end{equation}
	Here $\iota$ is  defined by $\iota\varphi(g)=\varphi(g^\iota)=\varphi({}^tg^{-1})$ for  any $\varphi\in\cA_{\text{cusp}}(GL_n)$ and $g\in GL_n(\A)$.
\end{proposition}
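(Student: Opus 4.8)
The plan is to follow the strategy of Cogdell and Piatetski-Shapiro and reduce the identity to a statement about Whittaker functions, exploiting the uniqueness of Whittaker models for cuspidal representations together with the Fourier--Whittaker expansion. First I would record the two structural facts that make the reduction possible. On the one hand, every $\varphi\in\cA_{\text{cusp}}(GL_n)$ is recovered from its Whittaker function $W_\varphi$ by its Fourier--Whittaker expansion, so it suffices to compare the two sides of (\ref{mainidentityeq}) after unfolding against this expansion. On the other hand, $\iota$ carries a cusp form to a cusp form on the contragredient, and the associated Whittaker functions are related by the long Weyl element $w_n=\spmat{&&1\\&\adots&\\1&&}$ through an identity of the form $W_{\iota\varphi}(g)=W_\varphi(w_n\,{}^tg^{-1})$, valid up to the inversion $\psi\mapsto\psi^{-1}$ of the additive character. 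I would establish or cite this relation first, as it is the bridge between the two sides of (\ref{mainidentityeq}).

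Next I would unfold both projection operators against the Fourier--Whittaker expansion. Substituting the expansion into the defining integral (\ref{podfneq}) for $\P_m^n$ and collapsing the sum over rational points against the unipotent integration, I expect $\P_m^n\varphi$ to be expressed as a single integral of $W_\varphi$ over an explicit unipotent subgroup $Y$ against an explicit character $\theta$. Carrying out the identical computation for the dual operator (\ref{dualpodfneq}), this time using the opposite (lower-triangular) unipotent structure built into $\wt\P_m^n$, I would obtain a parallel expression for $\wt\P_m^n\psi$ as an integral of $W_\psi$ over the corresponding opposite unipotent $\wt Y$ against a character $\wt\theta$.

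With both sides written as unipotent integrals of Whittaker functions, the core of the argument is to apply the dual expression to $\psi=\iota\varphi$ and transport it back to the standard side. Here I would insert the relation $W_{\iota\varphi}(g)=W_\varphi(w_n\,{}^tg^{-1})$ and then perform the change of variables $u\mapsto w_n\,{}^tu^{-1}w_n^{-1}$ on the integration variable, combined with transpose-inverse on the $GL_m$-argument. Conjugation by $w_n$ interchanges the upper- and lower-triangular unipotent subgroups and transports the dual character $\wt\theta$ to the standard character $\theta$; after this substitution the domain of integration, the character, and the integrand should coincide with those appearing in the expression for $\P_m^n\varphi$, which yields (\ref{mainidentityeq}).

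The main obstacle, and where I would spend most of the care, is precisely this last change of variables: one must verify that conjugation by $w_n$ composed with transpose-inverse maps the unipotent subgroup $\wt Y$ and its character exactly onto $Y$ and $\theta$, with trivial Jacobian, and that the range of integration is preserved. The combinatorics is exactly where the hypothesis $1\le m\le n-2$ enters, since the gap $n-m\ge2$ is what makes the integrated unipotent radical nontrivial; I would confirm the matching by writing the relevant matrices in block form and tracking how the anti-diagonal element $w_n$ permutes the blocks. A secondary point is to justify the formal interchanges of summation and integration, which should follow from the rapid decay of cusp forms and the absolute convergence of the Fourier--Whittaker expansion.
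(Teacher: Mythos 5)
Your setup is on the right track --- unfolding both sides against Fourier--Whittaker expansions and using the relation $\wt W_\varphi(g)=W_\varphi(w_ng^\iota)$ is exactly how the paper begins, and your computation of the standard side would reproduce Proposition~\ref{uprop} (note, though, that what comes out is a \emph{sum} over $U_m(F)\bs GL_m(F)$ of values of $W_\varphi$, not ``a single integral over a unipotent subgroup''). The genuine gap is in your final step. After you substitute the expansion of $\iota\varphi$ into (\ref{dualpodfneq}) you obtain
\begin{equation*}
\wt\P_m^n(\iota\varphi)(g)=\sum_{\gamma\in U_m(F)\bs GL_m(F)}\int_{X_{n,m}(\A)}\wt W_\varphi\of{\spmat{\gamma&\\&I_{n-m}}xw_{n,m}g^\iota}dx,
\end{equation*}
and you propose to match this with $\P_m^n\varphi(g)=\sum_\gamma W_\varphi\of{\spmat{\gamma&\\&I_{n-m}}g}$ by a change of variables built from conjugation by $w_n$ and transpose-inverse. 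This cannot work: the dual operator carries an \emph{extra} integration over the full noncompact adelic group $X_{n,m}(\A)\cong \A^{(n-m-1)m}$, which has no counterpart on the standard side, so the two domains of integration are not in measure-preserving bijection and no substitution with ``trivial Jacobian'' can equate them. A symptom of the problem is that your argument never uses left invariance of $\varphi$ under all of $GL_n(F)$ (automorphy/cuspidality) --- only the existence of a Whittaker expansion --- yet the identity is asserted only on $\cA_{\text{cusp}}(GL_n)$ and is false for a general vector in a generic representation.

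What is missing is the global input that bridges the two sides: the \emph{second} Fourier expansion of the cusp form along the opposite mirabolic $Q_n$, i.e.\ $U_\varphi=\varphi=V_\varphi$ (Lemma~\ref{basiclemma}), together with $V_\varphi=V_\varphi^m$ from automorphy. The paper then computes $\P_m^nV_\xi^m$ by an iterative unfolding (Proposition~\ref{vprop}, following Cogdell--Piatetski-Shapiro): at each stage one integrates one column of the unipotent variable, forces part of the rational sum to degenerate, and \emph{combines the remaining sum over $F$-points with the integral over a compact quotient to produce an integral over the full adelic group}. That mechanism --- not a change of variables --- is where the integral over $X_{n,m}(\A)$ arises on the cuspidal side, and it is the step your proposal has no substitute for. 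If you want to complete your argument, you should prove the analogue of Proposition~\ref{vprop} (or quote \cite[Lemma 2.3]{CogdellPiatetski-Shapiro1999}) and insert $\P_m^n\varphi=\P_m^nV_\varphi^m$ before comparing with the dual side.
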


Proposition~\ref{mainidentity} is essentially as same as Proposition A.1 in Miller and Schmid's article \cite{MillerSchmid2011}. 
In this note, we give a proof from the ideas in Cogdell and Piatetski-Shapiro's work \cite{CogdellPiatetski-Shapiro1999} on converse theorems for $GL(n)$. In \cite{CogdellPiatetski-Shapiro1999},  they defined two functions $U_\xi$ and $V_\xi$. The Fourier-Whittaker expansion of $\P_m^nU_\xi$ is well-known. Following the proof of \cite[Lemma 2.3]{CogdellPiatetski-Shapiro1999},  we will find the Fourier-Whittaker  expansion of $\P_m^nV_\xi$ in section~\ref{s:fwpo}, which is the main part of this note. Then we will use them to prove the identity (\ref{mainidentityeq}) in section~\ref{s:mainidpf}.

An interesting problem is whether one can use the identity (\ref{mainidentityeq}) to derive a Voronoi-type summation formula for $GL(n)\times GL(m)$ for $2\le m\le n-2$ as \cite{IchinoTemplier2013}. We hope it could be solved in the following years.

\section{The Fourier-Whittaker expansions of projections}\label{s:fw}

\subsection{The functions \texorpdfstring{$U_\xi$}{} and \texorpdfstring{$V_\xi$}{}}
Let $n\ge4$ be an integer. Let $F$ be a global field, $\A$ the ring of adeles of $F$, and $\psi$ a non-trivial additive character of $F\bs\A$.  Let $(\pi, V_\pi)$ be an irreducible generic representation
of $GL_n(\A)$. For any $\xi\in V_\pi$, let $W_\xi$ be the corresponding element
in the Whittaker model $\mathcal{W}(\pi,\psi)$ of $\pi$. Denote by $U_n$ the group of upper triangular matrices
in $GL_n$ with unit diagonal. We use the same letter $\psi$ to denote  the character on $U_n(\A)$  defined by $\psi(u)=\psi(u_{1,2}+\cdots+u_{n-1,n})$ for  $u=(u_{i,j})\in U_n(\A)$. 
For any $\xi$ in the space $V_\pi$ of $\pi$ and any $g\in GL_n(\A)$, set
\begin{align}
	U_\xi(g)&=\sum_{\gamma\in U_{n-1}(F)\bs GL_{n-1}(F)} W_\xi\of{\spmat{\gamma&\\
			&1}g},\label{udfn}\\
	V_\xi(g)&=\sum_{\gamma\in U_{n-1}(F)\bs GL_{n-1}(F)} W_\xi\of{\spmat{1&\\
			&\gamma}g}. \label{vdfn}
\end{align}
Then Cogdell and Piatetski-Shapiro \cite{CogdellPiatetski-Shapiro1999} used the following basic lemma as their approach to proving the converse theorems. 
\begin{lemma}[{\cite[Proposition 1]{CogdellPiatetski-Shapiro1999}}]\label{basiclemma}
	$\pi$ is automorphic cuspidal if and only if 
	\begin{equation}
		U_\xi(I_n)=V_\xi(I_n), \forall \xi\in V_\pi.
	\end{equation}
\end{lemma}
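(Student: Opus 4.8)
The plan is to follow the strategy of Cogdell and Piatetski-Shapiro, reducing the statement to two facts: the Fourier--Whittaker expansion of a cusp form ``from the top'' and ``from the bottom,'' together with a generation property of $GL_n$. Introduce the two mirabolic subgroups
\[
	P_n=\set{\spmat{h&x\\0&1}:h\in GL_{n-1},\ x\in F^{n-1}},\qquad
	Q_n=\set{\spmat{1&y\\0&h}:h\in GL_{n-1},\ y\in F^{n-1}}.
\]
A first reduction makes the condition global: since $W_{\pi(g)\xi}(x)=W_\xi(xg)$, the definitions (\ref{udfn}) and (\ref{vdfn}) give $U_{\pi(g)\xi}(I_n)=U_\xi(g)$ and $V_{\pi(g)\xi}(I_n)=V_\xi(g)$, so that ``$U_\xi(I_n)=V_\xi(I_n)$ for all $\xi$'' is equivalent to ``$U_\xi\equiv V_\xi$ on $GL_n(\A)$ for all $\xi$.'' It therefore suffices to characterize cuspidality by this identity holding pointwise everywhere.

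For the direction that cuspidality implies the identity, I would realize $\pi$ as $\xi\mapsto\varphi_\xi\in\cA_{\text{cusp}}(GL_n)$ with $\psi$-Whittaker function $W_\xi$. Expanding $\varphi_\xi$ successively along the unipotent radicals of the standard maximal parabolics and discarding the degenerate terms by cuspidality is exactly the Piatetski-Shapiro--Shalika argument and yields $\varphi_\xi(g)=U_\xi(g)$. Running the same peeling from the opposite corner, i.e.\ along the unipotent radicals attached to $Q_n$, gives the mirror expansion $\varphi_\xi(g)=V_\xi(g)$; the point to check here is that the successive generic characters assemble into the same standard character $\psi$ and that the central character is trivial on $F^\times$, so that both expansions involve literally the same $W_\xi$. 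Hence $U_\xi\equiv V_\xi$, and in particular the values at $I_n$ agree.

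For the converse, assume $U_\xi\equiv V_\xi$ for all $\xi$. Pushing the summation variable through a left translation and using that $W_\xi$ is left $(U_n(\A),\psi)$-equivariant while $\psi$ is trivial on $U_n(F)$, one checks that $U_\xi$ is left $P_n(F)$-invariant and $V_\xi$ is left $Q_n(F)$-invariant; thus the common function is invariant under $\langle P_n(F),Q_n(F)\rangle$. For $n\ge3$ this subgroup is all of $GL_n(F)$: the two blocks together furnish every elementary matrix except $E_{n,1}$, which is recovered as a commutator $[E_{n,k},E_{k,1}]$ with $1<k<n$, and the torus, so one generates $SL_n(F)$ and then $GL_n(F)$. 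Hence $U_\xi$ is left $GL_n(F)$-invariant, and $\xi\mapsto U_\xi$ intertwines $\pi$ with a space of automorphic forms; cuspidality follows because $U_\xi$ is presented purely through its generic expansion, so that integrating over $\quo{N_P}$ for the unipotent radical $N_P$ of any proper standard parabolic leaves, inside each term, an integral of $W_\xi$ over a unipotent subgroup on which $\psi$ is nontrivial, which vanishes. I expect the genuine obstacles to be analytic rather than formal: justifying absolute convergence and smoothness of the series $U_\xi,V_\xi$ (which rests on rapid-decay estimates for adelic Whittaker functions) and checking the moderate-growth and finiteness conditions needed to land in $\cA_{\text{cusp}}(GL_n)$. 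The one delicate combinatorial point is ensuring, in the forward direction, that the opposite peeling reproduces exactly $W_\xi$ with the standard $\psi$ and not a Weyl-twisted variant, which is precisely where the hypothesis $n\ge3$ (here subsumed by the standing $n\ge4$) is used.
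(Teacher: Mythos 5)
The paper does not actually prove this lemma; it is imported verbatim as Proposition~1 of Cogdell--Piatetski-Shapiro \cite{CogdellPiatetski-Shapiro1999}, so there is no internal proof to compare against. Your sketch is, in outline, exactly the CPS argument: the reduction from the pointwise identity at $I_n$ to $U_\xi\equiv V_\xi$ via $U_{\pi(g)\xi}(I_n)=U_\xi(g)$, the two Piatetski-Shapiro--Shalika expansions in the forward direction, and in the converse the left $P_n(F)$- and $Q_n(F)$-invariance together with the generation of $GL_n(F)$ by $P_n(F)$ and $Q_n(F)$ for $n\ge 3$ (your commutator $[E_{n,k},E_{k,1}]=E_{n,1}$ and the torus bookkeeping are correct). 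You also correctly flag the two standard caveats: convergence of the series (which in CPS is part of the hypotheses on $\pi$) and the fact that the opposite-corner expansion returns the same $W_\xi$ with the same $\psi$, using triviality of the central character on $F^\times$.

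The one step that is genuinely too quick is the cuspidality verification at the end. Your phrase ``leaves, inside each term, an integral of $W_\xi$ over a unipotent subgroup on which $\psi$ is nontrivial'' works literally only for the two extreme maximal parabolics of type $(n-1,1)$ and $(1,n-1)$, where $\spmat{\gamma&\\&1}N\spmat{\gamma&\\&1}^{-1}$ stays inside $U_n$ and one reads off $\int_{F\bs\A}\psi((\gamma x)_{n-1})\,dx=0$. For an intermediate maximal parabolic of type $(a,n-a)$ with $1<a<n-1$, conjugating the unipotent radical by $\spmat{\gamma&\\&1}$ does not land in a subgroup of $U_n$, so one cannot integrate term by term; the actual argument requires decomposing the coset space $U_{n-1}(F)\bs GL_{n-1}(F)$ into Bruhat cells relative to the parabolic (or an equivalent unfolding) and showing each cell contributes a vanishing character integral. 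That unfolding is the real content of the cuspidality half of CPS's Proposition~1, and your proposal should either carry it out or explicitly reduce to the two extreme parabolics and explain why that does not suffice on its own. The rest of the sketch is sound.
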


\begin{remark}
	For the projection operator $\P_{n-2}^n$ defined by (\ref{podfneq}), Cogdell and Piatetski-Shapiro \cite{CogdellPiatetski-Shapiro1999} actually showed that
	$\pi$ is automorphic cuspidal  if and only if
	\begin{equation}\label{convthmeq}
		\P_{n-2}^nU_\xi(I_n) =\P_{n-2}^nV_\xi(I_n)
	\end{equation}
	for all $\xi\in V_\pi$.
\end{remark}

\subsection{Projection operators}\label{podfns}

Let $1\le m\le n-2$, and let $Y_{n,m}$ be the unipotent radical of the standard parabolic subgroup of the partition
$(m+1,1,\dots,1)$ in $GL_n$. That is,  
$$Y_{n,m}=\set{\spmat{I_{m+1}&*&*&*\\&1&*&*\\&&\ddots&*\\&&&1\\}}\subset GL_{n}.$$

\begin{definition}\label{podfn}
	For $1\le m\le n-2$,  the {\it projection operator} $\P_m^n$ is defined by
	\begin{equation}\label{podfneq}
		\P_m^n\phi(g)=\int_{Y_{n,m}(F)\bs Y_{n,m}(\A)}\phi(yg){\psi}^{-1}(y)dy
	\end{equation}
	for a function $\phi$ on $GL_n(\A)$ which is left invariant under $Y_{n,m}(F)$. 
\end{definition}
\begin{remark}
	The definition~\ref{podfn} of $\P_m^n$ comes from Jacquet and Liu \cite[page 197]{JacquetLiu2020} and differs slightly from the definition used by Cogdell in \cite[\S2.2.1]{Cogdell2007}.
\end{remark}

Now, let $M_{(n-m-1)\times m}$ be the space of matrices of type $(n-m-1)\times m$, and 
\begin{equation}
	X_{n,m}=\set{\spmat{I_{m}&&\\x&I_{n-m-1}&\\&&1}: x\in M_{(n-m-1)\times m} }\subset GL_n. 
\end{equation}
Let
$w_n=\spmat{&&1\\&\adots&\\1&&}$
be the $n\times n$ matrix with unit opposite diagonal,  and
$w_{n,m}=\spmat{w_m&\\&w_{n-m}}.$

\begin{definition} \label{dualpodfn}
	The {\it dual projection operator}	$\wt{\P}_m^n$ is defined by
	\begin{equation}\label{dualpodfneq}
		\wt\P_m^n\phi(g)=\int_{ X_{n,m}(\A)}\Big[\int_{Y_{n,m}(F)\bs Y_{n,m}(\A)}\phi(yxw_{n,m}g^\iota)\psi^{-1}(y)dy\Big]dx
	\end{equation}
	for a function $\phi$ on $GL_n(\A)$ which is left invariant under $Y_{n,m}(F)$. Here for the convention of notations, we write $dg=dx$ for $g=\spmat{I_{m}&&\\x&I_{n-m-1}&\\&&1}\in X_{n,m}(\A)$ and $x\in M_{(n-m-1)\times m}(\A)$.
\end{definition}
\begin{remark}
	Our definition of $\wt\P_m^n$ differs from the definition of Cogdell \cite{Cogdell2007}. In \cite{Cogdell2007}, $\wt\P_m^n=\iota\circ\P_m^n\circ\iota$.
\end{remark}

\subsection{Fourier-Whittaker expansions of the projections}\label{s:fwpo}

Let $P_n=\spmat{GL_{n-1}&\ast\\&1}$, and $Q_n=\spmat{1&\ast\\&GL_{n-1}}$. The following proposition gives  the Fourier-Whittaker expansion of $\P_m^nU_\xi$.
\begin{proposition}[{\cite[Lemma 2.2]{Cogdell2007}}]\label{uprop}
	For any $\xi\in V_\pi$,
	$U_\xi$  is left invariant under $P_n(F)$ and 
	\begin{equation}\label{uproj}
		\P_m^nU_\xi(g)=\sum_{\gamma\in U_{m}(F)\bs GL_{m}(F)} W_\xi\of{\spmat{\gamma&\\
				&I_{n-m}}g}.
	\end{equation}
\end{proposition}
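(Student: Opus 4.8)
The plan is to prove the two assertions in turn: first that $U_\xi$ is left $P_n(F)$-invariant (which in particular makes $\P_m^nU_\xi$ meaningful, since $Y_{n,m}\subset P_n$), and then the expansion (\ref{uproj}) by an iterated one-column unfolding. For the invariance, recall that $P_n(F)$ is generated by the Levi elements $\spmat{h&\\&1}$ with $h\in GL_{n-1}(F)$ and by the unipotent radical $\spmat{I_{n-1}&v\\&1}$ with $v\in F^{n-1}$. On the Levi, right translation $\gamma\mapsto\gamma h$ merely permutes the cosets $U_{n-1}(F)\bs GL_{n-1}(F)$, so $U_\xi\of{\spmat{h&\\&1}g}=U_\xi(g)$. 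On the radical, the identity $\spmat{\gamma&\\&1}\spmat{I_{n-1}&v\\&1}=\spmat{I_{n-1}&\gamma v\\&1}\spmat{\gamma&\\&1}$ together with the Whittaker transformation law produces a factor $\psi\of{(\gamma v)_{n-1}}$; since $\gamma$ and $v$ have entries in $F$ and $\psi$ is trivial on $F$, this factor is $1$, giving invariance.

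For the expansion, I would set, for $m\le k\le n-1$,
\[
U_\xi^{(k)}(g)=\sum_{\gamma\in U_k(F)\bs GL_k(F)}W_\xi\of{\spmat{\gamma&\\&I_{n-k}}g},
\]
so that $U_\xi^{(n-1)}=U_\xi$ and $U_\xi^{(m)}$ is exactly the right-hand side of (\ref{uproj}). The heart of the argument is the one-step identity
\[
\int_{C^{(k+1)}(F)\bs C^{(k+1)}(\A)}U_\xi^{(k)}(cg)\,\psi^{-1}(c)\,dc=U_\xi^{(k-1)}(g),\qquad m+1\le k\le n-1,
\]
where $C^{(k+1)}=\set{\spmat{I_k&c&\\&1&\\&&I_{n-k-1}}:c\in\A^{k}}$ is the abelian ``column $k+1$'' subgroup (for $k=n-1$ this is the last-column group $\spmat{I_{n-1}&c\\&1}$).

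To prove the one-step identity I would push the column past $\gamma$ using $\spmat{\gamma&\\&I_{n-k}}\spmat{I_k&c&\\&1&\\&&I_{n-k-1}}=\spmat{I_k&\gamma c&\\&1&\\&&I_{n-k-1}}\spmat{\gamma&\\&I_{n-k}}$ and apply Whittaker equivariance to extract the factor $\psi\of{r_\gamma\cdot c}$, where $r_\gamma$ is the last row of $\gamma$. Integrating over $C^{(k+1)}(F)\bs C^{(k+1)}(\A)\cong(F\bs\A)^k$ against $\psi^{-1}(c)=\psi(-c_k)$ and invoking orthogonality of additive characters on $F\bs\A$ annihilates every term except the cosets with $r_\gamma=(0,\dots,0,1)$. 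These are precisely the cosets $U_k(F)\bs P_k(F)$ with $P_k$ the mirabolic of $GL_k$, and $\spmat{a&b\\&1}\mapsto U_{k-1}(F)a$ is a bijection $U_k(F)\bs P_k(F)\to U_{k-1}(F)\bs GL_{k-1}(F)$; using the coset-invariance of the summand (again because $\psi$ is trivial on $F$) to choose representatives with $b=0$ identifies the surviving sum with $U_\xi^{(k-1)}(g)$, as claimed.

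Finally I would iterate this identity from $k=n-1$ down to $k=m+1$ and reassemble the $n-1-m$ column integrals into the single integral defining $\P_m^n$: since $Y_{n,m}(\A)=\prod_{j=m+2}^{n}C^{(j)}(\A)$ with product Haar measure and the character $y\mapsto\psi(y)$ factors as the product of the per-column characters $c\mapsto\psi(c_{j-1})$ (the superdiagonal entries $u_{j-1,j}$ being additive across the product), Fubini turns the composite of the column integrals into $\int_{Y_{n,m}(F)\bs Y_{n,m}(\A)}U_\xi(yg)\psi^{-1}(y)\,dy=\P_m^nU_\xi(g)$, while the composite evaluates to $U_\xi^{(m)}$, which is (\ref{uproj}). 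The main obstacle is analytic rather than formal: justifying the interchange of the (a priori infinite) coset sum defining $U_\xi^{(k)}$ with the unipotent integrations requires the absolute convergence of these Fourier--Whittaker series, which I would invoke exactly as in \cite{CogdellPiatetski-Shapiro1999}; a secondary point requiring care is the coset bookkeeping, ensuring that at each stage the surviving double coset is precisely $U_{k-1}(F)\bs GL_{k-1}(F)$.
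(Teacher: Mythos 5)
Your proof is correct, and it is the standard column-by-column unfolding argument: the paper itself offers no proof of this proposition, simply citing Cogdell's Lemma~2.2, but your one-step identity (killing all cosets except those with last row $(0,\dots,0,1)$ via orthogonality, then identifying $U_k(F)\bs P_k(F)$ with $U_{k-1}(F)\bs GL_{k-1}(F)$) and its iteration over the columns of $Y_{n,m}$ are precisely the technique the paper deploys, in mirror image and with the extra conjugation by $w_{n,m}$, in its proof of Proposition~\ref{vprop}. The only point you defer to the literature --- absolute convergence of the Fourier--Whittaker series justifying the interchange of sum and integral --- is handled the same way there.
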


On the other hand,  we set
\begin{equation}\label{vmdfn}
	V_\xi^m(g):=V_\xi(\alpha_mg)
\end{equation}
for all $g\in GL_n(\A)$, where
$$\alpha_m=\spmat{0&1&0\\I_m&0&0\\0&0&I_{n-m-1}}.$$
Then $V_\xi^m$ is left invariant under $Q_m(F)$, where $Q_{m}=\alpha_m^{-1}Q_n\alpha_m$. To find the Fourier-Whittaker expansion of $\P_m^nV_\xi$, it suffices to find it for
for $\P_m^nV_\xi^m$.

First, we rewrite the expression of $V_\xi^m(g)$ as  the same form as $U_\xi(g)$. Let	$\wt W_\xi(g):=W_\xi(w_ng^\iota)$ be the dual of $W_\xi$.
Then $\wt W_\xi\in \mathcal{W}(\wt{\pi},{\psi}^{-1})$, where $\wt{\pi}$ is the contragradient representation of $\pi$. 
\begin{lemma}\label{pvlem}
	In terms of $\wt{W}_\xi(g)$, $V_\xi^m(g)$ can be rewritten as follows
	\begin{equation}\label{pvlemeq}
		V_\xi^m(g)=\sum_{\gamma\in U_{n-1}(F)\bs GL_{n-1}(F)}\wt W_\xi\of{\spmat{\gamma&\\&1}w_n\alpha_m g^\iota}.
	\end{equation}
\end{lemma}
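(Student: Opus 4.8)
The plan is to start from the definition $V_\xi^m(g)=V_\xi(\alpha_m g)=\sum_{\gamma\in U_{n-1}(F)\bs GL_{n-1}(F)} W_\xi\of{\spmat{1&\\&\gamma}\alpha_m g}$ and to convert each Whittaker value $W_\xi$ into a value of its dual $\wt W_\xi$. First I would record the elementary properties of the involution $\iota$ and of $w_n$: since taking the inverse and taking the transpose are both anti-homomorphisms, their composite $\iota$ is a group homomorphism (indeed an involution, $(g^\iota)^\iota=g$), while $w_n$ is symmetric and satisfies $w_n^2=I_n$, so that $w_n^\iota={}^tw_n^{-1}=w_n$. Combining these with the defining relation $\wt W_\xi(g)=W_\xi(w_n g^\iota)$ and substituting $g=w_n k^\iota$ yields the inversion formula $W_\xi(k)=\wt W_\xi(w_n k^\iota)$, valid for every $k\in GL_n(\A)$.

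Next I would apply this inversion termwise with $k=\spmat{1&\\&\gamma}\alpha_m g$. Because $\iota$ is a homomorphism, $k^\iota=\spmat{1&\\&\gamma}^\iota\alpha_m^\iota g^\iota=\spmat{1&\\&{}^t\gamma^{-1}}\alpha_m^\iota g^\iota$, where I use that $\iota$ respects the block-diagonal decomposition. Since $\alpha_m$ is a permutation matrix, $\alpha_m^{-1}={}^t\alpha_m$ and hence $\alpha_m^\iota={}^t\alpha_m^{-1}=\alpha_m$, so the $\alpha_m$ appearing in the target formula is reproduced without change. At this stage each summand reads $\wt W_\xi\of{w_n\spmat{1&\\&{}^t\gamma^{-1}}\alpha_m g^\iota}$.

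The key computation is to push $w_n$ past the block-diagonal matrix. Conjugation by $w_n$ sends the $(i,j)$ entry to the $(n+1-i,n+1-j)$ entry, so it moves the single $1$ from the top-left to the bottom-right corner and carries the $(n-1)\times(n-1)$ block into the top-left, reversed by $w_{n-1}$ on each side; explicitly $w_n\spmat{1&\\&{}^t\gamma^{-1}}w_n=\spmat{w_{n-1}{}^t\gamma^{-1}w_{n-1}&\\&1}$, and therefore $w_n\spmat{1&\\&{}^t\gamma^{-1}}=\spmat{w_{n-1}{}^t\gamma^{-1}w_{n-1}&\\&1}w_n$. Substituting this turns the sum into $\sum_{\gamma}\wt W_\xi\of{\spmat{w_{n-1}{}^t\gamma^{-1}w_{n-1}&\\&1}w_n\alpha_m g^\iota}$.

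Finally I would reindex. The map $\phi(\gamma)=w_{n-1}{}^t\gamma^{-1}w_{n-1}$ is an automorphism of $GL_{n-1}$, and I expect the only point genuinely requiring care is to check that it descends to a bijection of the coset space $U_{n-1}(F)\bs GL_{n-1}(F)$ indexing the sum. For this I would verify that $\phi$ preserves $U_{n-1}$: if $u$ is upper-triangular unipotent then ${}^tu^{-1}$ is lower-triangular unipotent, and conjugating by $w_{n-1}$ reverses the index order and so returns an upper-triangular unipotent matrix; since $\phi$ is an involution (using $w_{n-1}^\iota=w_{n-1}$ and $w_{n-1}^2=I_{n-1}$) this containment upgrades to $\phi(U_{n-1})=U_{n-1}$, whence $\phi(U_{n-1}\gamma)=U_{n-1}\phi(\gamma)$ shows that $\phi$ permutes the left cosets bijectively. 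Replacing the summation index $\gamma$ by $\phi(\gamma)$ then produces exactly (\ref{pvlemeq}). The main obstacle is thus not any single estimate but keeping the bookkeeping of $\iota$, $w_n$, and $w_{n-1}$ consistent and confirming that this last change of index is legitimate on the quotient $U_{n-1}(F)\bs GL_{n-1}(F)$.
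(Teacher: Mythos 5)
Your proposal is correct and follows essentially the same route as the paper's proof: both apply the duality relation $W_\xi(k)=\wt W_\xi(w_n k^\iota)$ termwise, move $w_n$ past the block-diagonal matrix to produce $w_{n-1}\gamma^\iota w_{n-1}$ in the upper-left block, and conclude by reindexing the sum via the coset-permuting involution $\gamma\mapsto w_{n-1}{}^t\gamma^{-1}w_{n-1}$. The only difference is cosmetic: the paper factors the conjugation through the intermediate matrix $\alpha_{n-1}=w_n\spmat{w_{n-1}&\\&1}$, whereas you conjugate by $w_n$ directly.
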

\begin{proof} Here we follow the ideas in the proof of the expansion of $V_\xi(g)$ in \cite[\S3.2]{BookerKrishnamurthy2016}. 
	Notice first that $$\spmat{1&\\&\gamma}=\alpha_{n-1}\spmat{\gamma&\\&1}\alpha_{n-1}^{-1}, W_\xi(g)=\wt W_\xi(w_ng^\iota)  \text{ and }\alpha_{n-1}=w_n\spmat{w_{n-1}&\\&1}.$$ Then by the definition of $V_\xi^m$,  we have
	\begin{align}
		V^m_\xi(g)&=\sum_{\gamma\in U_{n-1}(F)\bs GL_{n-1}(F)}W_\xi\of{\spmat{1&\\&\gamma} \alpha_m g}\nonumber\\
		&=\sum_{\gamma\in U_{n-1}(F)\bs GL_{n-1}(F)}W_\xi\of{\alpha_{n-1}\spmat{\gamma&\\&1}\alpha_{n-1}^{-1} \alpha_m g}\nonumber\\
		&=\sum_{\gamma\in U_{n-1}(F)\bs GL_{n-1}(F)}\wt W_\xi\of{w_n \left[\alpha_{n-1}\spmat{\gamma&\\&1}\alpha_{n-1}^{-1} \alpha_m g\right]^\iota}\nonumber\\
		&=\sum_{\gamma\in U_{n-1}(F)\bs GL_{n-1}(F)}\wt W_\xi\of{w_n \alpha_{n-1}\spmat{\gamma^\iota&\\&1}\alpha_{n-1}^{-1} \alpha_m g^\iota}\nonumber\\
		&=\sum_{\gamma\in U_{n-1}(F)\bs GL_{n-1}(F)}\wt W_\xi\left(\spmat{w_{n-1}\gamma^\iota w_{n-1}^{-1}&\\&1} w_n
		\alpha_m g^\iota\right)\nonumber\\
		&=\sum_{\gamma\in U_{n-1}(F)\bs GL_{n-1}(F)}\wt W_\xi\of{\spmat{\gamma&\\&1}w_n\alpha_m g^\iota}.
	\end{align}
	Here, the last equality follows since the transformation $\gamma\mapsto w_{n-1}\gamma^\iota w_{n-1}^{-1}$ permutes
	the set of right cosets $U_{n-1}(F)\gamma$ in $GL_{n-1}(F)$.
\end{proof}

Now, we  follow the ideas in the proof of \cite[Lemma 2.3]{CogdellPiatetski-Shapiro1999} to deduce the Fourier-Whittaker expansion of $\P_m^nV_\xi^m$.

\begin{proposition}\label{vprop}
	For $V_\xi^m$ defined in (\ref{vmdfn}),  we have that
	\begin{equation}\label{vproj}
		\P_m^nV_\xi^m(g)=\sum_{\gamma\in U_{m}(F)\bs GL_{m}(F)}  \int_{  X_{n,m}(\A)}
		\wt{W}_\xi\left(\spmat{\gamma &\\&I_{n-m}}x w_{n,m} g^\iota\right) dx.	
	\end{equation}
\end{proposition}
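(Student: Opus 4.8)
The plan is to substitute the formula \eqref{pvlemeq} for $V_\xi^m$ into the definition \eqref{podfneq} of $\P_m^n$ and then carry out the unipotent integration by imitating the unfolding in \cite[Lemma 2.3]{CogdellPiatetski-Shapiro1999}. Since $\iota$ is a homomorphism, $(yg)^\iota=y^\iota g^\iota$, so that
\[
\P_m^nV_\xi^m(g)=\int_{Y_{n,m}(F)\bs Y_{n,m}(\A)}\ \sum_{\gamma\in U_{n-1}(F)\bs GL_{n-1}(F)}\wt W_\xi\of{\spmat{\gamma&\\&1}w_n\alpha_m\, y^\iota g^\iota}\psi^{-1}(y)\,dy.
\]
The interchange of the coset sum and the $Y_{n,m}$-integral is justified by the rapid decay of $\wt W_\xi$, which gives absolute convergence.

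First I would push the integration variable to the left of the Whittaker argument. Writing $h=w_n\alpha_m g^\iota$ and conjugating, $w_n\alpha_m y^\iota g^\iota=Bh$ with $B:=(w_n\alpha_m)\,y^\iota\,(w_n\alpha_m)^{-1}$; a direct check (using that $y^\iota={}^ty^{-1}$ is lower unitriangular and that the permutation $w_n\alpha_m$ sends the support of $Y_{n,m}^\iota$ into the upper-triangular positions) shows $B\in U_n(\A)$. Thus $y\mapsto B$ is a measure-preserving isomorphism of $Y_{n,m}$ onto a unipotent subgroup $Y'\subset U_n$, under which $\psi^{-1}(y)$ becomes an explicit character $\psi'$ of $Y'$. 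This recasts $\P_m^nV_\xi^m(g)$ as a $\psi'$-twisted unipotent period of the mirabolic sum $\sum_{\gamma}\wt W_\xi\of{\spmat{\gamma&\\&1}Bh}$. The heart of the argument is then the same Fourier-theoretic unfolding that proves Proposition~\ref{uprop}: integrating the coordinates of $Y'$ that carry $\psi'$ against the Whittaker equivariance $\wt W_\xi(uH)=\psi^{-1}(u)\wt W_\xi(H)$ (recall $\wt W_\xi\in\mathcal W(\wt\pi,\psi^{-1})$) collapses the sum over $U_{n-1}(F)\bs GL_{n-1}(F)$ down to a sum over $U_m(F)\bs GL_m(F)$, producing the factor $\spmat{\gamma&\\&I_{n-m}}$. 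The coordinates of $Y'$ transverse to the Whittaker character --- precisely those conjugate to $X_{n,m}$ --- are not absorbed and survive as the outer adelic integral $\int_{X_{n,m}(\A)}dx$; reassembling the remaining matrix factors identifies the inner argument as $x\,w_{n,m}g^\iota$, which is \eqref{vproj}.

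The step I expect to be the main obstacle is exactly this root-by-root bookkeeping. One must pin down how the conjugation $y\mapsto (w_n\alpha_m)y^\iota(w_n\alpha_m)^{-1}$, built from the long Weyl element $w_n$ and the cycle $\alpha_m$, redistributes the entries of $Y_{n,m}$ into three families --- those that unfold the coset sum, those that feed the Whittaker character, and those that remain free --- and then verify that the transported character $\psi'$ combines with the $\psi^{-1}$ supplied by $\wt W_\xi$ so that the collapse yields exactly $U_m(F)\bs GL_m(F)$ while the surviving integral over $X_{n,m}(\A)$ carries the trivial character. Doing this uniformly for all $1\le m\le n-2$, rather than only the case $m=n-2$ treated in \cite{CogdellPiatetski-Shapiro1999}, is what demands the most care, along with checking convergence and the normalization of Haar measures throughout.
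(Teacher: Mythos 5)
Your outline follows the same broad strategy as the paper's proof (substitute Lemma~\ref{pvlem} into the definition of $\P_m^n$, move the $Y_{n,m}$-variable across the Whittaker argument by conjugation, then unfold), but it stops exactly where the proof begins. Everything you defer as ``root-by-root bookkeeping'' is the actual content of the proposition: the paper decomposes $y$ into a $U_{n-m}$-part $u$ and a block $y'\in M_{m\times(n-m-1)}$, conjugates by $w_{n,m}$ (absorbing $w_n\alpha_m w_{n,m}^{-1}$ into the coset sum), and then performs an \emph{iterated} column-by-column integration of $u$: each column forces one diagonal entry of $\gamma$ to be $1$ (via a Fourier argument on $F\bs\A$), peels off a Bruhat-type factor $\gamma=\spmat{\gamma'&\\v&1}$, kills the components of $v$ paired with the character, and collapses the surviving $v\in F^m$ with the corresponding compact integral. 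None of this is carried out or even correctly anticipated in your sketch.

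The decisive missing idea is the last point. Your change of variables $y\mapsto B$ maps the \emph{compact} domain $Y_{n,m}(F)\bs Y_{n,m}(\A)$ onto another compact quotient, so the coordinates ``transverse to the Whittaker character'' can only survive as an integral over a finite-volume quotient --- yet the target formula \eqref{vproj} has an integral over all of $X_{n,m}(\A)$, which has infinite volume. This cannot come from conjugation and Fourier inversion alone; it arises because at each stage the leftover rational points $v\in F^{m}$ from the decomposition of $\gamma$ are recombined with the integral over $M_{(n-m-1)\times m}(F)\bs M_{(n-m-1)\times m}(\A)$ to unfold it to the full adelic group. Relatedly, the claim that this is ``the same Fourier-theoretic unfolding that proves Proposition~\ref{uprop}'' is off the mark: no noncompact integral appears in \eqref{uproj} at all, and the emergence of $\int_{X_{n,m}(\A)}$ is precisely what distinguishes the $V_\xi$ side and forces the inductive argument of \cite[Lemma 2.3]{CogdellPiatetski-Shapiro1999}. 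As written, your argument would terminate with an integral over $X_{n,m}(F)\bs X_{n,m}(\A)$ and would not prove \eqref{vproj}.
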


\begin{proof}	
	By the definition of $\P_m^n$ in (\ref{podfneq}), we have
	\begin{equation}\label{pvdfn}
		\P_m^nV_\xi^m(g)=\int_{Y_{n,m}(F)\bs Y_{n,m}(\A)}V_\xi^m(yg)\psi^{-1}(y)dy.
	\end{equation}
	Then by Lemma \ref{pvlem}, we can rewrite $V_\xi^m(yg)$ as follows:
	\begin{align}
		&V_\xi^m(yg)\nonumber\\
		&= \sum_{\gamma\in U_{n-1}(F)\bs GL_{n-1}(F)}\wt{W}_\xi\of{\spmat{\gamma&\\&1}w_n\alpha_m (yg)^\iota}\nonumber\\
		&= \sum_{\gamma\in U_{n-1}(F)\bs GL_{n-1}(F)}\wt{W}_\xi\of{\spmat{\gamma&\\&1}\cdot w_n\alpha_m \spmat{w_m&\\&w_{n-m}}^{-1}\cdot \spmat{w_m&\\&w_{n-m}} y^\iota g^\iota}\nonumber\\
		&= \sum_{\gamma\in U_{n-1}(F)\bs GL_{n-1}(F)}\wt{W}_\xi\of{\spmat{\gamma&\\&1} \spmat{w_m&\\&w_{n-m}} y^\iota g^\iota},
	\end{align}
	where 
	$$w_n\alpha_m \spmat{w_m&\\&w_{n-m}}^{-1}:=\spmat{\gamma_0&\\&1}$$
	and $\gamma_0\in GL_{n-1}(F)$ is absorbed by $\gamma$ into the sum.
	
	For $y$, we have the following decomposition:
	\begin{equation}
		y=\spmat{I_m&\\&u}\spmat{I_m&0&y'\\0&1&0\\0&0&I_{n-m-1}},
	\end{equation}
	where $u\in U_{n-m}(F\bs\A)$ and $y'\in M_{m\times(n-m-1)}(F\bs\A)$. Here we use the notations $U_{n}(F\bs\A)$ and $M_{m\times\ell}(F\bs\A)$ as shorthand for $U_n(F)\bs U_n(\A)$ and $M_{m\times\ell}(F)\bs M_{m\times\ell}(\A)$, respectively. It follows that 
	\begin{equation}
		\spmat{w_m&\\&w_{n-m}}y^\iota \spmat{w_m&\\&w_{n-m}}^{-1}=\spmat{I_m&\\&w_{n-m}u^\iota w_{n-m}^{-1}}\spmat{I_{m}&&\\x'&I_{n-m-1}&\\&&1},
	\end{equation}
	where $x'\in M_{(n-m-1)\times m}(F\bs\A)$ is a permutation of $-{}^ty'$. 
	
	Notice that $\psi(y)=\psi^{-1}(w_{n-m}u^\iota w_{n-m}^{-1})$. 
	Making change of variables $u\mapsto w_{n-m}u^\iota w_{n-m}^{-1}$ and $y'\mapsto x'$, we get 
	\begin{multline}\label{pvdfn2}
		\P_m^nV_\xi^m(g)=\int_{x'\in  M_{(n-m-1)\times m}(F\bs\A)}\int_{u\in U_{n-m}(F\bs\A)}\sum_{\gamma\in U_{n-1}(F)\bs GL_{n-1}(F)}\\
		\wt{W}_\xi\of{\spmat{\gamma&\\&1}  \spmat{I_m&\\&u}\spmat{I_{m}&&\\x'&I_{n-m-1}&\\&&1} \spmat{w_m&\\&w_{n-m}} g^\iota}\psi(u)dudx'.
	\end{multline}
	
	Next, we integrate over the last column of $u$.  
	
	Decompose $u$ into the product of three matrices:
	\begin{equation}\label{vprojmd}
		\spmat{I_m&\\&u}=\spmat{I_{n-2}&&\\
			&1&u_{n-1}\\&&1}\spmat{I_m&&&&&\\&1&&&&u_{m+1}\\&&\ddots&&&\vdots\\&&&1&&u_{n-2}\\&&&&1&0\\&&&&&1}\spmat{I_m&&\\&u'&\\&&1},
	\end{equation}
	where $u'\in U_{n-m-1}(F\bs\A)$  satisfies $\psi(u)=\psi(u_{n-1})\psi(u')$.
	
	First, by $\spmat{\gamma &\\&1}\spmat{I_{n-2}&&\\&1&u_{n-1}\\&&1} =\spmat{I_{n-2}&&*\\&1&\gamma_{n-1,n-1}u_{n-1}\\&&1} \spmat{\gamma &\\&1}$, we have
	\begin{equation}
		\wt{W}_\xi\left(\spmat{\gamma &\\&1}\spmat{I_m&\\&u}g\right)=\psi(-\gamma_{n-1,n-1}u_{n-1})\wt{W}_\xi\left(\spmat{\gamma &\\&1}\spmat{I_m&&&&&\\&1&&&&u_{m+1}\\&&\ddots&&&\vdots\\&&&1&&u_{n-2}\\&&&&1&0\\&&&&&1}\spmat{I_m&&\\&u'&\\&&1}g\right).	
	\end{equation}
	
	Integrating over $u_{n-1}$ on $F\bs\A$, we get $\gamma_{n-1,n-1}=1$. So we can write $\gamma$ as
	\begin{equation}
		\gamma=\spmat{\gamma'&\\&1}\spmat{I_{n-2}&\\v&1}
	\end{equation}
	with $\gamma'\in U_{n-2}(F)\bs GL_{n-2}(F)$ and $v=(v_1,\dots,v_{n-2})\in F^{n-2}$. Then
	\begin{equation}
		\spmat{I_{n-2}&&\\v&1&\\&&1}\spmat{I_m&&&&&\\&1&&&&u_{m+1}\\&&\ddots&&&\vdots\\&&&1&&u_{n-2}\\&&&&1&0\\&&&&&1}=\spmat{I_m&&&&&\\&1&&&&u_{m+1}\\&&\ddots&&&\vdots\\&&&1&&u_{n-2}\\&&&&1&u_{m+1}v_{m+1}+\cdots+u_{n-2}v_{n-2}\\&&&&&1}\spmat{I_{n-2}&&\\v&1&\\&&1}.
	\end{equation}
	This implies that
	\begin{multline}
		\wt{W}_\xi\left(\spmat{\gamma &\\&1}\spmat{I_m&&&&&\\&1&&&&u_{m+1}\\&&\ddots&&&\vdots\\&&&1&&u_{n-2}\\&&&&1&0\\&&&&&1}\spmat{I_m&&\\&u'&\\&&1}g\right)\\
		=\psi(-u_{m+1}v_{m+1}-\cdots-u_{n-2}v_{n-2})\wt{W}_\xi\left(\spmat{\gamma' &\\&I_2}\spmat{I_{n-2}&&\\v&1&\\&&1}\spmat{I_m&&\\&u'&\\&&1}g\right).
	\end{multline}
	Integrating over $u_{m+1},\dots,u_{n-2}$, we get  $v_{m+1}=\cdots=v_{n-2}=0$.  Thus, after integrating  over the last column of $u$, we get that
	\begin{multline}\label{pvfeeq3}
		\P_m^nV_\xi^m(g)=\int_{u'\in  U_{n-m-1}(F\bs\A)}\int_{x'\in  M_{(n-m-1)\times m}(F\bs\A)}\sum_{v\in F^{m}}\sum_{\gamma'\in U_{n-2}(F)\bs GL_{n-2}(F)}\\
		\wt{W}_\xi\Bigg(\spmat{\gamma' &\\&I_2}\spmat{I_{m}&&&\\0&I_{n-m-2}&&&\\v&0&1&\\&&&1}\spmat{I_m&&\\&u'&\\&&1}\spmat{I_{m}&&\\x'&I_{n-m-1}&\\&&1}
		\spmat{w_m&\\&w_{n-m}} g^\iota\Bigg) dx'\psi(u')du'.
	\end{multline}
	
	Notice that
	\begin{equation}
		\spmat{I_m&&\\&u'&\\&&1}\spmat{I_{m}&&\\x'&I_{n-m-1}&\\&&1}=\spmat{I_{m}&&\\u'x'&I_{n-m-1}&\\&&1}\spmat{I_m&&\\&u'&\\&&1}.
	\end{equation}
	By a change of variables $u'x'\mapsto x'$ and collapsing $v$ with the last row of $x'$, 
	\begin{multline}
		\P_m^nV_\xi^m(g)=\int_{u'\in  U_{n-m-1}(F\bs\A)} \int_{x''\in  M_{(n-m-2)\times m}(F\bs\A)}\int_{x_{n-1}\in \A^{m}}\\
		\sum_{\gamma\in U_{n-2}(F)\bs GL_{n-2}(F)}
		\wt{W}_\xi\left(\spmat{\gamma &\\&I_{2}}\spmat{I_m&&&\\x''&I_{n-m-2}&&\\x_{n-1}&0&1&\\&&&1}\spmat{I_m&&\\&u'&\\&&1}\spmat{w_m&\\&w_{n-m}} g^\iota\right)  dx_{n-1}dx''\psi(u')du'.
	\end{multline}
	Then interchanging the matrices on $(x'',x_{n-1})$ and $u'$ and making a change of variables again, we get that
	\begin{multline}\label{vprojeq2}
		\P_m^nV_\xi^m(g)=\int_{u'\in  U_{n-m-1}(F\bs\A)} \int_{x''\in  M_{(n-m-2)\times m}(F\bs\A)}\int_{x_{n-1}\in \A^{m}}
		\sum_{\gamma\in U_{n-2}(F)\bs GL_{n-2}(F)}\\
		\wt{W}_\xi\left(\spmat{\gamma &\\&I_{2}}\spmat{I_m&&\\&u'&\\&&1}\spmat{I_m&&&\\x''&I_{n-m-2}&&\\x_{n-1}&0&1&\\&&&1}\spmat{w_m&\\&w_{n-m}} g^\iota\right) dx_{n-1}dx''\psi(u')du'.
	\end{multline}

	Iterating the previous steps on $u'$ from (\ref{vprojmd}) to (\ref{vprojeq2}) until getting rid of $u'$, we can finally reach that
	\begin{multline}\label{}
		\P_m^nV_\xi^m(g)=\sum_{\gamma\in U_{m}(F)\bs GL_{m}(F)}  \int_{  M_{(n-m-1)\times m}(\A)}
		\wt{W}_\xi\left(\spmat{\gamma &\\&I_{n-m}}\spmat{I_{m}&&\\x&I_{n-m-1}&\\&&1}\spmat{w_m&\\&w_{n-m}} g^\iota\right) dx,
	\end{multline}
	which is (\ref{vproj}). This completes the proof of Proposition~\ref{vprop}.
\end{proof}

\section{Proof of Proposition~\ref{mainidentity}}\label{s:mainidpf}

In this section, we prove Proposition~\ref{mainidentity} by the Fourier-Whittaker expansions of $\P_m^nU_\xi$ and $\P_m^nV_\xi$.

Suppose $\varphi\in \cA_{\text{cusp}}(GL_n)$ is an automorphic cusp forms om $GL_n(\A)$ and $g\in GL_n(\A)$. By Lemma~\ref{basiclemma}, we have $\varphi(g)=U_\varphi(g)=V_\varphi(g)$. Since $\varphi$ is automorphic, we have $V_\varphi(g)=V^m_\varphi(g)$ as well. It follows that
\begin{equation}\label{s:idpfeq1}
	\P_m^n\varphi(g)=\P_m^nU_\varphi(g)=\P_m^nV^m_\varphi(g).
\end{equation}
Let $\wt\varphi=\iota\varphi$, then $\wt{W}_\varphi\in\mathcal{W}(\wt{\pi},\psi^{-1})$ is the Whittaker function of $\wt\varphi$. By Proposition~\ref{uprop}, we get the Fourier-Whittaker expansion of $\P_m^n\wt\varphi(g)$ as follows
\begin{equation}\label{s:idpfeq2}
	\P_m^n\wt\varphi(g)=\sum_{\gamma\in U_{m}(F)\bs GL_{m}(F)} \wt{W}_\varphi\of{\spmat{\gamma&\\
			&I_{n-m}}g}.
\end{equation}
On the other hand, by the definition (\ref{dualpodfneq}) of $\wt\P_m^n\wt\varphi(g)$, we have
\begin{equation}\label{s:idpfeq3}
	\wt\P_m^n\wt\varphi(g)=\int_{  X_{n,m}(\A)} \P_m^n\wt\varphi(x w_{n,m} g^\iota)dx.
\end{equation}
Plugging (\ref{s:idpfeq2}) into (\ref{s:idpfeq3})  and interchanging the sum and the integration, we get  that
\begin{equation}\label{s:idpfeq4}
	\wt\P_m^n\wt\varphi(g)= \sum_{\gamma\in U_{m}(F)\bs GL_{m}(F)}  \int_{  X_{n,m}(\A)}
	\wt{W}_\varphi\left(\spmat{\gamma &\\&I_{n-m}}x w_{n,m} g^\iota\right) dx.
\end{equation}
By Proposition~\ref{vprop}, the right hand side of (\ref{s:idpfeq4}) is equal to the Fourier-Whittaker expansion of $\P_m^nV^m_\varphi(g)$, which means that
\begin{equation}\label{s:idpfeq5}
	\wt\P_m^n\wt\varphi(g)=\P_m^nV^m_\varphi(g).
\end{equation}
Hence, from (\ref{s:idpfeq1}) and (\ref{s:idpfeq5}) we get that
\begin{equation}
	\P_m^n\varphi(g)=\wt\P_m^n\wt\varphi(g),
\end{equation}
which gives us the desired identity (\ref{mainidentityeq}). This completes the proof of Proposition~\ref{mainidentity}.

\begin{remark}
	From the proof above, one can see that the identity (\ref{mainidentityeq}) in Proposition~\ref{mainidentity} corresponds to Cogdell's identity $\P_m^nU_\xi(I_{m+1})=\P_m^nV_\xi(I_{m+1})$ in \cite[Proposition 5.1]{Cogdell2007}, which is related to the functional equation of the Eulerian integrals \cite[Theorem 2.1]{Cogdell2007} for $GL(n)\times GL(m)$.
\end{remark}

\section{Acknowledgements}
The author would like to thank his advisor Professor Xiaoqing Li for her guidance on Cogdell and Piatetski-Shapiro's converse theorems.  He is also grateful to Pan Yan, Liyang Yang, and Shaoyun Yi for the helpful discussions.
%\bibliographystyle{plain}
%\bibliography{bw}	

\end{document}